\newcommand{\B}{\mathcal{B}}
\newcommand{\F}{\mathcal{F}}
\newcommand{\PP}{\mathcal{P}}
\newcommand{\ot}{{\otimes}}
\newcommand{\op}{{\oplus}}
\newcommand{\ga}{\gamma}
\newcommand{\one}{\mathbf{1}}
\newcommand{\Z}{\mathbb{Z}}
\newcommand{\CC}{\mathcal{C}}
\DeclareMathOperator{\Diag}{Diag}
\DeclareMathOperator{\Sem}{Sem}
\DeclareMathOperator{\sVec}{sVec}
\DeclareMathOperator{\Rep}{Rep}
\DeclareMathOperator{\Hom}{Hom}
\newcommand{\mE}{\mathcal{E}}
\newcommand{\mZ}{\mathcal{Z}}
\newcommand{\mcd}{\mathcal{D}}
\newcommand{\mcg}{\mathcal{G}}
\newcommand{\Q}{\mathcal{Q}}
\theoremstyle{definition}
\newtheorem{thm}{Theorem}[section]
\newtheorem{rmk}{Remark}
\newcounter{commentcounter}
\title[]{On Realizing Modular Data}
\date{\today}
\author{Parsa Bonderson}
\email{parsab@microsoft.com}
\address{Microsoft Station Q,
    Santa Barbara, CA
    U.S.A.}
\author{Eric C. Rowell}
\email{rowell@math.tamu.edu}
\address{Department of Mathematics,
    Texas A\&M University,
    College Station, TX
    U.S.A.}
\author{Zhenghan Wang}
\email{zhenghwa@microsoft.com}
\address{Microsoft Station Q and Department of Mathematics,
    University of California,
    Santa Barbara, CA
    U.S.A.}
\begin{document}

\begin{abstract}

We use zesting and symmetry gauging of modular tensor categories to analyze some previously unrealized modular data obtained by Grossman and Izumi.  In one case we find all realizations and in the other we determine the form of possible realizations; in both cases all realizations can be obtained from quantum groups at roots of unity.

\end{abstract}
\thanks{}
\maketitle

\section{Introduction}

The modular data of a modular tensor category (MTC) is the most useful invariant of MTCs.  Recently, several iterations of new constructions inspired by the doubled Haagerup MTC produced intriguing new candidate modular data that have passed all known consistency conditions imposed on modular data from the full MTC structure \cite{EG}, with the latest construction presented in \cite{GI}.  These new candidate modular data lead to an obvious question: can we construct MTCs that realize these modular data?  The difficulty encountered in realizing MTCs with these candidate modular data suggests that some major constructions for MTCs have yet to be discovered.

As suggested in \cite{Bruillard16,CGPW,CSW}, zesting and symmetry gauging can be considered as a new construction of new MTCs from old ones.  In this short note, we point out that some of the candidate modular data in \cite{GI} can be realized by zesting and gauging of known modular tensor categories.  

\section{Realizing candidate modular data by zesting and gauging}

\subsection{Topological charge conjugation symmetry}

Modular data of a MTC was once conjectured to determine uniquely a MTC. However, this conjecture was recently disproven in \cite{Mignard-Schauenburg} by the construction of counterexamples. In retrospect, counterexamples to the conjecture were already implied by the results of \cite{Davydov}, which showed that there are some MTCs for which the map that sends an object $X$ to its dual $X^*$, which we will refer to as ``topological charge conjugation,'' is not a braided tensor auto-equivalence (topological symmetry). On the other hand, topological charge conjugation always preserves the modular data. It follows that the modular data cannot uniquely determine the MTC for the examples in which topological charge conjugation is not a braided tensor auto-equivalence. Even so, modular data is still a powerful and convenient invariant of MTCs.

\subsection{Grossman-Izumi Modular Data}
In \cite{GI}, candidate modular data is presented that generalizes the modular data of known Drinfeld centers of near-group fusion categories.
In detail, the input for the data consists of: two involutive metric groups $(G,q_1,\theta_1)$ and $(\Gamma,q_2,\theta_2)$, i.e.  $(G,q_1)$ and $(\Gamma,q_2)$ are metric groups, while $\theta_1$ and $\theta_2$ are, respectively, involutive automorphisms of $G$ and $\Gamma$ that preserve the quadratic forms $q_1$ and $q_2$.  Since $G$ and $\Gamma$ are necessarily Abelian, we will write them as additive, but the co-domain of $q_i$ is the multiplicative group $U(1)$.
Additionally, these must satisfy:
\begin{enumerate}
    \item The pre-metric groups obtained by restriction of $q_i$ to the $\theta_i$ fixed points of $G$ and $\Gamma$ coincide, i.e. $(G^{\theta_1},q_1)\cong (\Gamma^{\theta_2},q_2)=:(K,q_0)$.
    \item $\mathcal{G}(q_1)=\frac{1}{\sqrt{|G|}}\sum_{g\in G}q_1(g)=-\mathcal{G}(q_2)=\frac{1}{\sqrt{|\Gamma|}}\sum_{\gamma\in \Gamma}q_2(\gamma)$.
\end{enumerate}

Given such $(G,q_1,\theta_1),(\Gamma,q_2,\theta_2)$, one furthermore chooses $G_*\subset G$ and $\Gamma_*\subset \Gamma$ so that $G=K\sqcup G_*\sqcup \theta_1(G_*)$ and $\Gamma = K\sqcup \Gamma_*\sqcup \theta_2(\Gamma_*)$, where $K=G^{\theta_1}\cong\Gamma^{\theta_2}$.  The label set for the modular data is $$J:=K\sqcup( K\times {\pi})\sqcup G_*\sqcup \Gamma_*.$$  In particular the rank of the modular data is $|K|+\frac{|G|+|\Gamma|}{2}$.

The non-degenerate bicharacter associated with $(G,q_1)$ and $(\Gamma,q_2)$ will be denoted $B_i(g,h)=\frac{q_i(g)q_i(h)}{q_i(g+h)}$ for $i = 1$ and $2$, respectively, and we identify $K$ and $K\times\{\pi\}$ with the appropriate subgroup of both $G$ and $\Gamma$ when computing these values.  
Define constants $a=1/\sqrt{|G|}$ and $b=1/\sqrt{|\Gamma|}$.  The $S$ matrix has the following block structure, where $k,k^\prime$ range over $K$, $\kappa, \kappa^\prime$ range over $K\times\{\pi\}$, $g,g^\prime$ range over $G_*$, and $\gamma, \gamma^\prime$ range over $\Gamma_*$:
\begin{equation}
\label{s-matrix}
S = \left[
\begin{smallmatrix} \frac{a-b}{2}{B_1(k,k^\prime)} & \frac{a+b}{2}{B_1(k,\kappa^\prime)} & a{B_1(k,g^\prime)} & b{B_2(k,\gamma^\prime)}\\
\frac{a+b}{2} {B_1(\kappa,k^\prime)}& \frac{a-b}{2} {B_1(\kappa,\kappa^\prime)} &a{B_1(\kappa,g^\prime)}&-b{B_2(\kappa,\gamma^\prime)}
\\a{B_1(g,k^\prime)} & a{B_1(g,\kappa^\prime)}&a{(B_1(g,g^\prime)+B_1(\theta_1(g),g^\prime))} &0\\
b{B_2(\gamma,k^\prime)}& -b{B_2(\gamma,\kappa^\prime)}&0& -b\left(B_2(\gamma,\gamma^\prime)+B_2(\theta_2(\gamma),\gamma^\prime)\right)
\end{smallmatrix}
\right]
\end{equation}
The $T$ matrix has the form:
\begin{equation}
T = \Diag[q_0(k),q_0(\kappa),q_1(g),g_2(\gamma)].
\end{equation}

In general, it is an open question whether this data is realized, i.e., if there is a modular category with this $S$ and $T$ matrices. Assuming a realization exists, it is clear that the objects labeled by $K$ are invertible (have dimension $1$), the objects in $K\times\{\pi\}$ have dimension $\frac{a+b}{a-b}$, the objects labeled by $G_*$ have dimension $\frac{2a}{a-b}$, and the objects labeled by $\Gamma_*$ have dimension $\frac{2b}{a-b}$.  Thus, there are 4 distinct dimensions, generically.  Indeed, unless $9|G|=|\Gamma|$ the objects labeled by $K$ are the only invertible objects, and the fusion rules of this pointed subcategory are the same as the group operation in $K$, i.e., it is the pointed ribbon fusion category associated with the pre-metric group $(K,q_0)$.

\subsection{Zesting to realize case $G=\Z_2\times\Z_2$ or $G=\Z_4$ and $\Gamma=G\times \Z_3$}
At the October 2018 BIRS workshop on fusion categories and subfactors, Izumi presented \cite{izumibanff} the particular example $G=\Z_2\times\Z_2$ of their construction of candidate modular data and asked if a categorification is known. He pointed out that any such categorification could not be a Drinfeld center as the (multiplicative) central charge is not $1$.  The second and third authors recognized and pointed out the similarities between the presented data and that of the rank=$10$ Drinfeld center of the $1/2E_6$ theory $\mathcal{Z}(\mathcal{E})$ in \cite{HRW}.  We checked later that all 16 zestings \cite{Bruillard16} of $\mathcal{Z}(\mathcal{E})$ appear as Grossman-Izumi modular data, which also appeared in \cite{GI}.  We present the details of zesting here.  The case of zesting spin modular categories by the $\sVec$ subcategory afforded by the distinguished fermion is worked out in \cite{Bruillard16}, and the general case is found in \cite{DGPRZ}.

When $G=\Z_2\times \Z_2$ with $\theta_1(x,y)=(y,x)$ or $G=\Z_4$ with $\theta_1(x)=-x$ it is clear that $K\cong \Z_2$ in either case.  The possible $\theta_1$-invariant forms on $\Z_2\times \Z_2$ are $q_1^f(x,y)=(-1)^{x^2+xy+y^2}$, $q_1^{tc}(x,y)=(-1)^{xy}$ and  $q_1^s(x,y)=i^{\pm(x^2+y^2)}$, corresponding to the 3 fermion theory 3F, the toric code theory TC, and $(\Sem^{\boxtimes 2})^{\pm 1}$, respectively.  The corresponding Gauss sums are $-1,1$ and $\pm i$.  For $G=\Z_4$ there are also $4$ possible $\theta_1$-invariant forms: $q_1^r(x):=\zeta_8^{rx^2}$ with $r$ odd, corresponding to the $4$ distinct $\Z_4$ theories, with Gauss sums $\zeta_8^r$.  In all cases the form on $K\cong\Z_2$ is $q_0(x)=(-1)^x$ corresponding to the pre-metric group $\sVec$.  

As in \cite{GI}, the smallest interesting case is $\Gamma=G\times \Z_3$ with $G$ as above.  In order to fulfil $G^{\theta_1}\cong \Gamma^{\theta_2}$, we clearly should take $\theta_2(A,z)=(\theta_1(A),-z)$ where $A\in G$.  From this we can already see that $|G_*|=1$ and $|\Gamma_*|=5$.  For $G=\Z_2\times \Z_2$ we can choose $G_*=\{(1,0)\}$ and $\Gamma_*=\{(1,0,1),(1,0,0),(0,1,1),(1,1,1),(0,0,1)\}$, and for $G=\Z_4$ we take $G_*=\{1\}$ and $\Gamma_*=\{(1,0),(1,1),(0,1),(1,2),(2,1)\}$. In all cases, the object corresponding to the non-trivial element $\psi\in K$ is a fermion.  In particular, each category must be $\Z_2$-graded, with the objects $X$ satisfying $B_i(X,\psi)=1$ forming the trivial component.  In all cases, this trivial component is either $PSU(2)_{10}$ or its complex conjugate (which can be seen to be distinct, see \cite{Bruillard17}).
We go through the cases:
\begin{enumerate}
    \item If $q_1(x,y)=(-1)^{xy}$ then $q_2(x,y,z)=i^{\pm (x^2+y^2)}\omega^{\pm z^2}$ where $\omega=e^{i 2\pi /3}$.  These cases are recognized as $\mZ(\mE)$ and its complex conjugate: one sees that the (multiplicative) central charge is $1$.
    \item If $q_1(x,y)=(-1)^{x^2+xy+y^2}$ we similarly get two possible choices: $q_2(x,y,z)=i^{\pm (x^2+y^2)}\omega^{\mp z^2}$ where $\omega=e^{i 2\pi /3}$. 
    
\end{enumerate}

Conclusion: these 16 distinct modular data are realized by the rank $10$ minimal modular extensions of $PSU(2)_{10}$ and its complex conjugate.  

\subsection{Gauging to realize case $G=\Z_4\times\Z_4$, $\Gamma=\Z_{16}\times\Z_2$}

\subsubsection{Grossman-Izumi data for $G=\Z_4\times \Z_4$ and $\Gamma=\Z_{16}\times\Z_2$.} We follow the paper \cite[Section 3.2.3]{GI}.
In this case we start with 

\begin{equation}\label{thetas}
    \theta_1(x,y)=(y,x), \quad \theta_2(x,y)=(3x+8y,x+y).
\end{equation} It is clear that $G^{\theta_1}=\langle (1,1)\rangle\cong\Z_4$ and $\Gamma^{\theta_2}=\langle (4,1)\rangle \cong\Z_4$.  We assume that $q_2(x,y)=\zeta_{32}^{rx^2}i^{-ry^2}$, with $r\in\{\pm 1,\pm 3\}$, as these preserve $\theta_2$. Notice that the Gauss sum $\mcg(q_2)$ for $r=\pm 3$ is $-1$ whereas for $r=\pm 1$ we have $+1$.  Since $q_2(4,1)=q_2(12,1)=i^r$, this gives us a further constraint on $q_1$.  The possible $q_1$ and values of $r$ are determined essentially as follows:

\begin{enumerate}
    \item First consider the form $q_1(x,y)=i^{sxy}$ on $G=\Z_4\times \Z_4$ for $s$ odd.  Since $q_1(1,1)=q_1(3,3)=i^s$, we see that $s\equiv r\pmod{4}$ as $q_1(1,1)=q_2(4,1)$.  The Gauss sums are always $\mcg(q_1)=1$, so the $\mcg(q_1)=-\mcg(q_2)$ condition above shows we must take $r=\pm 3$.
    \item Next we consider $q_1(x,y)=i^{s(x^2+xy+y^2)}$ for $s$ odd.  The considerations as above again give $q_1(1,1)=i^{-s}$, so that $s\equiv-r\pmod{4}$.  The Gauss sums $\mcg(q_1)=1$ for all $s$, so again we must choose $r=\pm 3$.
    \item The case $q_1(x,y)=\zeta_8^{r(x^2+y^2)}$ gives $\mcg(q_1)=-i$ for all $r$, so there is no compatible choice of $r$.
    
\end{enumerate}  Grossman and Izumi \cite{GI} have verified that that such categories exist, at least for the choices $q_1(x,y)=i^{\pm 3 xy}$ with $r=\pm 3$ using \cite{Iz02}.  For the sake of definiteness we will choose $r=3$, so that $q_2(x,y)=\zeta_{32}^{3x^2}i^{y^2}$. There are two compatible choices of $q_1$, namely a hyperbolic form $q_1^h(x,y)=i^{-xy}$ and an elliptic form
$q_1^e(x,y)=i^{(x^2+xy+y^2)}$.  The other choices of $r$ correspond to complex conjugation.

In all cases we obtain rank $28$ modular data with labels $$J:=K\sqcup( K\times {\pi})\sqcup G_*\sqcup \Gamma_*.$$ as above.  Here we may take $K=\{(0,0),(1,1),(2,2),(3,3)\}=G^{\theta_1}$ or $$K=\{(0,0),(4,1),(8,0),(12,1)\}=\Gamma^{\theta_2}.$$  We take transversals:
$G_*=\{(1,0),(2,0),(2,1),(3,0),(3,1),(3,2)\}$
and
\begin{eqnarray*}
\Gamma_*=\{(1,0),(2,0),(3,0),(4,0),(5,0),(9,0),(10,0)\\  (0,1),(1,1),(2,1),(5,1),(6,1),(7,1),(13,1)\}.\end{eqnarray*}  When convenient we will decorate these labels with $g$ or $\gamma$ to indicate that they are in $G$ or $\Gamma$ respectively.
 The objects with labels in $K$ are invertible, while those with labels in $K\times \{\pi\}$ are $3+2\sqrt{2}$, the objects labeled by $G_*$ have dimension $4+2\sqrt{2}$ and those with labels in $\Gamma_*$ have dimension $2+2\sqrt{2}$.  

The pointed subcategory corresponding to labels in $K$ is a $\Z_4$ pre-metric group $\CC(\Z_4,q_0)$ with $q_0(x)=i^{-x^2}$.  
In particular there is a boson $b$ given by the element $(2,2)\in G^{\theta_2}$ or $(8,0)\in\Gamma^{\theta_2}$, which can be condensed to give a modular category $[\CC_{\Z_2}]_0$; that is, the trivial component of the $\Z_2$-de-equivariantization by $\Rep(\Z_2)\cong\langle b\rangle$.   This is equivalent to the \emph{modularization} \cite{Brug} of the centralizer of the category generated by the boson, i.e., $[\langle b \rangle ^\prime]_{\Z_2}$.  
The goal of this section is to prove the following, where $\overline{\F}$ denotes the complex conjugate of $\F$.

\begin{thm}
Let $\mathbf{G}:=(G,q_1^\epsilon,\theta_1)$ and $\mathbf{\Gamma}:=(\Gamma,q_2,\theta_2)$ be involutive metric groups with 
$G=\Z_4\times\Z_4$ and $\Gamma=\Z_{16}\times \Z_2$; $q_1^h(x,y)=i^{-xy}$,  
$q_1^e(x,y)=i^{(x^2+xy+y^2)}$ and $q_2(x,y)=\zeta_{32}^{3x^2}i^{y^2}$; and $\theta_1$ and $\theta_2$ as in (\ref{thetas}).
If $\CC$ is a modular category with $S$ and $T$ matrices as constructed above from $(\mathbf{G},\mathbf{\Gamma})$, then $[\CC_{\Z_2}]_0\cong \overline{\Sem}\boxtimes \B$ where $\B$ is either $\overline{PSU(3)_5}$ obtained from $\overline{PSU(3)_5}$ by the Galois automorphism $q\rightarrow -q$.
\end{thm}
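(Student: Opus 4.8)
The plan is to study $[\CC_{\Z_2}]_0$ directly through the boson $b$, split off a semion factor by M\"uger's theorem, and then match the remaining rank-$7$ piece with $\overline{PSU(3)_5}$ by computing its modular data. First I would verify that $b=(2,2)\in K$ really condenses: the given $T$-matrix gives $\theta_b=q_0(2)=i^{-4}=1$, and from the $S$-matrix one finds $B_1(b,b)=q_1(2,2)^2=1$ for both $q_1^h$ and $q_1^e$, so $\langle b\rangle\cong\Rep(\Z_2)$ is Tannakian. I would then grade $\CC$ by the monodromy charge $X\mapsto S_{Xb}/S_{X0}$ (a sign, since $d_b=1$) and read off $\CC_0=\langle b\rangle'$ from the block $S$-matrix. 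A short computation using $B_1((x,y),(2,2))=(-1)^{x+y}$ (valid for both forms) and $B_2((x,y),(8,0))=(-1)^{x}$ shows $\CC_0$ contains all of $K$ and $K\times\{\pi\}$, exactly the two $G_*$ labels $(2,0),(3,1)$, and six of the fourteen $\Gamma_*$ labels, so that $\dim\CC_0=\dim\CC/2$. De-equivariantizing then produces the modular category $\M:=[\CC_{\Z_2}]_0$ of dimension $\dim\CC/4=32(3+2\sqrt{2})$.

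Next I would locate the semion. The invertibles of $\M$ are the image of $K\cong\Z_4$ under condensation of $b$, namely $K/\langle b\rangle\cong\Z_2=\{\one,\sigma\}$, where $\sigma$ is the class of $(1,1)$ and inherits the twist $q_0(1)=i^{-1}=-i$; its self-monodromy is $(-i)^2=-1$, so $\langle\sigma\rangle\cong\overline{\Sem}$ is a non-degenerate pointed subcategory. By M\"uger's theorem $\M\cong\overline{\Sem}\boxtimes\B$ with $\B:=\overline{\Sem}'$ of rank $7$ and $\dim\B=16(3+2\sqrt{2})$. To identify $\B$ I would compute the pairs $(d_X,\theta_X)$ of its seven simples by following the de-equivariantization orbit by orbit: each free $\langle b\rangle$-orbit descends to a single object of the same dimension, while the $b$-fixed labels — precisely $(2,0),(3,1)$ in $G_*$ and $(4,0),(0,1)$ in $\Gamma_*$ — each split into two objects of half the dimension (this bookkeeping reproduces $\dim\M=\dim\CC/4$ as a consistency check).

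This yields for $\B$ the dimension spectrum $1,\,1+\sqrt{2},\,1+\sqrt{2},\,2+\sqrt{2},\,2+\sqrt{2},\,2+2\sqrt{2},\,3+2\sqrt{2}$, which is exactly that of the adjoint subcategory $PSU(3)_5:=SU(3)_5^{\mathrm{ad}}$, the trivial component of the $\Z_3$-grading of $SU(3)_5$. I would note that this subcategory is modular, since the pointed part $\langle J\rangle\cong\Z_3$ is its M\"uger centralizer and $\langle J\rangle\cap SU(3)_5^{\mathrm{ad}}$ is trivial. Because twists are preserved under de-equivariantization, I can read the $T$-matrix of $\B$ directly; comparing it with the twists $1,\,e^{3\pi i/4},\,1,\,-i,\,-i,\,-1,\,-1$ of $PSU(3)_5$ shows the inherited twists match those of $\overline{PSU(3)_5}$ object-for-object.

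The hard part will be the fixed-point resolution and the clean separation of the two cases. The four split objects' $S$-matrix entries are not inherited from $\CC$ and must be resolved using the simple-current de-equivariantization formula together with modularity of $\B$. I expect the two inputs to behave as follows: $q_1^h$ and $q_1^e$ give identical twists on every descendant, so $T(\B)$ is the same in both cases, whereas $B_1^h\neq B_1^e$ enters the off-diagonal $S$-entries through $2\cos(\pi/8)=\sqrt{2+\sqrt{2}}$. The two resulting $S$-matrices should then differ exactly by $\sqrt{2+\sqrt{2}}\mapsto-\sqrt{2+\sqrt{2}}$, which is the Galois automorphism $q\mapsto-q$ with $q=e^{\pi i/8}$ (it fixes $\sqrt{2}$ and all eighth roots of unity occurring in $T$, but negates $2\cos(\pi/8)$). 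Matching the full modular data — invoking rigidity of rank-$7$ modular categories with these fusion rules should any ambiguity remain after comparing $S$ and $T$ — then identifies $\B$ with $\overline{PSU(3)_5}$ in one case and with its $q\mapsto-q$ Galois conjugate in the other, which completes the proof.
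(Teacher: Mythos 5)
Your first half is essentially the paper's own argument, and it is correct: you cut out $\mcd=\langle b\rangle'$ from the block $S$-matrix (getting the same two $G_*$ labels and six $\Gamma_*$ labels), de-equivariantize, locate the conjugate semion coming from the orbit of $(1,1)$ (twist $-i$, self-monodromy $-1$), and split $[\CC_{\Z_2}]_0\cong\overline{\Sem}\boxtimes\B$ by M\"uger's factorization theorem. Your orbit-versus-fixed-point bookkeeping is also right: the $b$-fixed labels are exactly $(2,0)_g,(3,1)_g,(0,1)_\gamma,(4,0)_\gamma$, and the resulting spectrum of dimensions and twists for the seven simples of $\B$ agrees with Table~\ref{Bdata}.

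The gap is in how you identify $\B$. Your endgame --- compute the full $S$ and $T$ of $\B$ and compare with $\overline{PSU(3)_5}$ --- cannot close the proof even if carried out: modular data does not determine a modular category (Mignard--Schauenburg, cited in this paper's introduction), and the ``rigidity of rank-$7$ modular categories with these fusion rules'' you propose to invoke is not an available result; it is precisely the thing that must be proven. The paper supplies this step via the Kazhdan--Wenzl reconstruction theorem: since $\B\boxtimes\PP(\Z_3)$ has the fusion rules of $SU(3)_5$, any such category is obtained from $SU(3)_5$ by an associativity twist (which is trivial on the adjoint component) and a Galois automorphism; positivity of dimensions, the fact that the unpointed category $PSU(3)_5$ admits exactly two braidings and a unique spherical structure, and the twist comparison then cut the possibilities down to $q=e^{-i\pi/8}$ or $q=e^{i7\pi/8}$. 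Note that even to set this up one needs the fusion rules of $\B$, which the paper extracts from the Verlinde formula applied to $\CC_{ad}$ together with $\Hom_{\B}(F(x_1),F(x_2))\cong\Hom_{\CC_{ad}}(x_1,x_2\otimes(\one\oplus b))$ --- deliberately avoiding any $S$-matrix of $\B$, because the entries involving the split objects $Y_i,U_j$ are \emph{not} functions of the modular data of $\CC$ (this is the fixed-point-resolution ambiguity you flag but hope to ``resolve''). This also breaks your proposed disambiguation of the two cases: by Remark~\ref{rmk1}(iii), $q_1^h$ and $q_1^e$ take identical values on every label of $\langle b\rangle'$ (in particular on $(2,0)_g$ and $(3,1)_g$), so both choices feed literally identical data into $\B$, and your prediction that $q_1^h$ yields one Galois conjugate and $q_1^e$ the other is false. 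The ``either/or'' in the theorem is an ambiguity the method cannot resolve, not a dichotomy indexed by the choice of $q_1^\epsilon$.
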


\begin{proof}
The first step is to determine $\langle b\rangle^\prime$, where $b$ is the order $2$ element of $K$, i.e. $(2,2)\in G$ or $(8,0)\in \Gamma$.  By \cite{Brug}, the simple objects $X$ that centralize $b$ are precisely those with $\tilde{S}_{b,X}=d_X$, where $\tilde{S}$ is the $S$-matrix renormalized so that $\tilde{S}_{0,0}=1$ and $d_X$ is the dimension of the object labeled by $X$.  From the form of the $S$ matrix in Eqn.~(\ref{s-matrix}) we see that  these correspond to those $X\in J$ such that $B_i^\epsilon(b,X)=1$, where $B_1^h,B_1^e$ is the form obtained from $q_1^g$/$q_1^e$ and $B_2$ is obtained from $q_2$, depending on whether the label $X$ is in $G$ or $\Gamma$.  Since $B_1^\epsilon((2,2),(x,y))=(-1)^{x+y+2}$ for either choice $\epsilon\in\{h,e\}$ and $B_2((8,0),(x,y))=(-1)^x$ We find that these are:
\begin{enumerate}
    \item $K$
    \item $K\times \{\pi\}$
    \item $\{(2,0)_g,(3,1)_g\}\subset G_*$ and
    \item $\{(0, 1)_\ga, (2,1)_\ga, (6, 1)_\ga,(4, 0)_\ga,(2, 0)_\ga,(10, 0)_\ga\}\subset \Gamma_*$.
\end{enumerate}
In particular we find that the rank of $\mcd:=\langle b\rangle^\prime$ is $4+4+2+6=16$, and $\mcd$ is $\Z_2$-graded, inherited from the $\Z_4$ grading on $\CC$.  Since we have the $S$-matrix we can simply apply the Verlinde formula to determine the fusion rules and compute the fusion rules etc. for $\mcd$ directly.  However, for future work we prefer to obtain the result more economically, so we take a more finessed approach.

Next we will show that $\mcd_{\Z_2}$ has a $\boxtimes$-factorization into two modular categories. Since the pointed subcategory with labels in $K$ is $\CC(\Z_4,q_0)\subset \langle b\rangle^\prime$ and all other objects have non-integral dimension it is clear that the pointed subcategory of $\mcd_{\Z_2}$ is $\CC(\Z_4,q_0)_{\Z_2}$ which has rank $2$. In particular $\mcd_{\Z_2}$ is $\Z_2$-graded. Moreover, since modularization is a ribbon functor $F_b$ \cite{Brug} the non-trivial object $z:=F_b(1,1)$ has  $S_{z,z}=S_{(1,1),(1,1)}=-1$ and $\theta_z=\theta_{(1,1)}=q_1^\epsilon((1,1))=-i$ for $\epsilon\in\{e,h\}$.  We can then conclude that $z$ is (conjugate to) a semion, i.e., $\overline{\Sem}\cong\langle z\rangle\subset\mcd_{\Z_2}$ is the pointed subcategory. Thus $\mcd\cong\overline{\Sem}\boxtimes\B$ for some modular category $\B$.  

We now observe that $\B$ is the trivial component of the $\Z_2$-grading on $\mcd_{\Z_2}$, obtained as the $\Z_2$-de-equivariantization of $\CC_{ad}\subset \mcd$.  Now since $\CC_{ad}=\CC_{pt}^\prime$ and $\CC_{pt}\cong\CC(\Z_4,q_0)$, we employ the same technique above showing that a simple object $X\in\CC_{ad}$ if and only if $B_i^\epsilon(a,X)=1$ where the tensor generator $a\in\CC_{pt}$ is labeled by $(1,1)\in G$ or $(4,1)\in \Gamma$.  These are: $$J_{ad}=\{(0,0),(2,2), (0,0,\pi),(2,2,\pi),(3,1)_g, (2,1)_\gamma,(6,1)_\gamma,(4,0)_\gamma\}.$$ Let us denote the corresponding objects by $\one,b,X_1,X_2,Y,Z_1,Z_2,Z_3$ respectively.  The twists are as follows:
$\theta_\one=\theta_b=\theta_{X_i}=1$, $\theta_Y=i$, $\theta_{Z_1}=\theta_{Z_2}=e^{i 5\pi /4}$, and $\theta_{Z_3}=-1$.

In order to finish, we must determine the action of $b$ on this rank $8$ category.  Since $\dim(b\otimes X)=\dim(X)$ we immediately see that:
 $b\otimes Y\cong Y $ and $b\otimes X_1\cong X_2$ and of course $b\ot b\cong \one$. The usual yoga of de-equivariantization implies that $F_b(Y)=Y_1\oplus Y_2$ where $Y_i$ are simple objects of dimension $\dim(Y)/2=2+\sqrt{2}$ and $F_b(X_1)=F_b(X_2)=W$ is a simple object of dimension $3+2\sqrt{2}$.   We deduce from the twists of $Z_1,Z_2$ and $Z_3$ above that $b\otimes Z_3\cong Z_3$, so that $F_b(Z_3)=U_1\oplus U_2$ with each $U_i$ simple of dimension $\dim(Z_3)/2=1+\sqrt{2}$. Finally, we use the Verlinde formula to show that $b\ot Z_1\cong Z_2$, which implies that $F_b(Z_1)=F_b(Z_2)=V$ is a simple object of dimension $2+2\sqrt{2}$.
 Thus $\B$ is a rank $7$ modular category, with simple objects and data as in Table~\ref{Bdata}.
\begin{table}[h]
    \caption{$\B$ data}
    \label{Bdata}
	\centering\begin{tabular}{|c|c|c|c|c|}
		\hline
	$X$ & $d_X$ & $\theta_X$ & $J_{ad}$& $\CC_{ad}$\\
		\hline\hline
	$\one$ & $1$ & $1$ & $(0,0), (2,2)$& $\one,b$\\
		\hline
	$Y_i$ & $2+\sqrt{2}$ & $i$ & $(3,1)_g$& $Y$\\
		\hline
		$W$ & $3+2\sqrt{2}$ & $1$ & $(0,0,\pi),(2,2,\pi)$& $X_1,X_2$\\
		\hline
	$U_i$ & $1+\sqrt{2}$ & $-1$ & $(4,0)_\gamma$ & $Z_3$\\
	\hline
	$V$ & $2+2\sqrt{2}$ &$e^{i 5\pi /4}$ & $(2,1)_\gamma,(6,1)_\gamma$ & $Z_1,Z_2$\\
		\hline
	\end{tabular}
\end{table}

 We must work out the fusion rules for $\B$.  For this we use the Verlinde formula to determine the fusion rules of $\CC_{ad}$, and then use the fact that $\Hom_{\B}(F(x_1),F(x_2))\cong \Hom_{\CC_{ad}}(x_1,x_2\ot (\one\oplus b))$ from \cite{M2} and some multiplicity/dimension arguments to determine the fusion rules.  One can calculate that the fusion rules for $\CC_{ad}$ are identical for the two choices of $q_1^\epsilon$, so we may consider both cases simultaneously (see Remark \ref{rmk1} below).  Table~\ref{fusiontable} show some of the relevant calculations.
 
 \begin{table}[h]   \caption{Fusion rules of $\B$ from condensation of $\CC_{ad}$}
     \centering
     \begin{tabular}{|c|c|}\hline
     $\CC_{ad}$ & $\B$ \\
     \hline\hline
      % $ Z_1^{\ot 2}\cong \one\oplus X_2\oplus Y\oplus Z_1\oplus Z_2$  &$V^{\ot 2}\cong \one\oplus 2V\oplus W\oplus Y_1\oplus Y_2 $\\\hline
     $ Z_1\ot X_1\cong X_1 \oplus X_2 \oplus Y \oplus Z_2 \oplus Z_3 $   & $V\ot W\cong 2W\oplus V\oplus Y_1\oplus Y_2\oplus U_1\oplus U_2$\\\hline
     %$   X_1^{\ot 2} \cong \one\oplus X_1 \oplus X_2 \oplus Y \oplus Z_1 \oplus Z_2\oplus Z_3$ &$ W^{\ot 2}\cong \one\oplus 2W\oplus 2V\oplus Y_1\oplus Y_2\oplus U_1\oplus U_2 $\\\hline
     $Z_3^{\ot 2}\cong \one\op b\op Y\op Z_1\op Z_2\op Z_3$ & $(U_1\op U_2)^{\ot 2}\cong 2\one\op Y_1\op Y_2\op 2V\op U_1\op U_2$\\\hline
     $Y^{\ot 2}\cong \one\op b\op 2X_1\op 2X_2\op Y\op Z_1\op Z_2\op Z_3$ & $(Y_1\op Y_2)^{\ot 2}\cong 2\one\op 4W\op Y_1\op Y_2\op 2V\op U_1\op U_2$\\\hline
     $Z_3\ot Z_1\cong X_1\op X_2\op Y\op Z_3$ & $(U_1\op U_2)\ot V\cong 2W\op Y_1\op Y_2\op U_1\op U_2$\\\hline
   
     \end{tabular}
     \label{fusiontable}
 \end{table}
 Determining the fusion rules is a somewhat tedious sequence of calculations, essentially working out sufficiently many such rules until all are determined.  To illustrate this, we will show that the $Y_i$ and $U_i$ are non-self-dual and determine $Y_1\ot Y_2$ and $U_1\ot U_2$.
 
By the 2nd row of Table~\ref{fusiontable} we see that $U_1\ot U_2\cong \one\oplus V$ by multiplicity/dimension counting, hence $U_1\cong U_2^*$. Moreover $V\subset U_1\ot U_2$ implies that $U_i\subset U_i\ot V$ so the 4th and 1st rows of the table gives us
 $U_1\ot V\cong W\op U_1\op Y_j$ and $U_2\ot V\cong W\op U_2\op Y_k$ where $j\neq k$.  (Since there is labeling ambiguity between $Y_1$ and $Y_2$ we may choose $Y_j=Y_1$ and $Y_k=Y_2$.)  On the other hand, since $V^*\cong V$, we have $(U_1\ot V)^*\cong (U_2\ot V)$, so that $Y_1\cong Y_2^*$.
 A similar multiplicity/dimension calculation using the 4th row of the table now implies that $Y_1\ot Y_2\cong \one\op W\op V$.

Continuing in this way we eventually arrive at a complete set of fusion rules.  Ordering the simple objects in $\B$ as $[\one,W,Y_1,Y_2,V,U_1,U_2]$ the fusion rules can be determined from the fusion matrix for $Y_1$:
$$N_{Y_1}:=\left[ \begin {array}{ccccccc} 0&0&0&1&0&0&0\\ \noalign{\medskip}0&1&
1&1&1&1&0\\ \noalign{\medskip}1&1&0&0&1&0&0\\ \noalign{\medskip}0&1&1&0
&0&0&1\\ \noalign{\medskip}0&1&0&1&1&0&1\\ \noalign{\medskip}0&0&1&0&1
&0&0\\ \noalign{\medskip}0&1&0&0&0&1&0\end {array} \right]
.$$
  
  Now we claim that these fusion rules are the same as those of $PSU(3)_5$.  To see this one simply observes that the matrix $N_{Y_1}$ can be obtained from $N_\Lambda$ below by a permutation of columns/rows.  
  
  We can now adapt the main theorem of \cite{KazWenzl} to show that this is enough to show that $\B$ is obtained from ${PSU(3)_5}$ by Galois conjugation.  The argument is as follows: $PSU(3)_5$ is the trivial component of the $\Z_3$-grading on $SU(3)_5$, so that $\B\boxtimes \PP(\Z_3)$ and $SU(3)_5$ have the same fusion rules, where $\PP(\Z_3)$ is a pointed modular category with fusion rules like $\Z_3$.  The main result of \cite{KazWenzl} now implies that $\B\boxtimes \CC(\Z_3,q)$  is equivalent to a category $\F$ that is obtained from $SU(3)_5$ by twisting the associativity (by a $3$rd root of unity, in this case) and applying a Galois automorphism of $\Q(e^{i \pi /8})$. Now the associativity twisting is trivial on the trivial component of $SU(3)_5$, so $\F_0\cong \B$ is obtained from $PSU(3)_5$ by Galois conjugation.  
  
Only 4 of the 8 Galois conjugates of $PSU(3)_5$ have positive dimensions, two of which are known to be unitary. Further observe that the modular category $PSU(3)_5$ and its Galois conjugates have no fusion subcategories, and are \emph{unpointed} in the terminology of \cite{Nikshych-braidings} and therefore have exactly two braidings by \emph{loc. cit.} Corollary 4.8, which are reverses of each other.  Since there are no invertible objects, there is a unique spherical structure.  Comparing twists, we see that only the two choices $q=e^{- i \pi /8}$ and $q=e^{i 7\pi /8}$ are possible.
\end{proof}
\begin{rmk}\label{rmk1}
\begin{enumerate}
\item Although we did not use it in our proof, the $\Z_4$-grading on $\CC$ can be determined. We have $\CC_0=\CC_{ad}$ so that $J_{ad}=J_0$ while the simple objects in $\CC_2$ are those in $\mcd\setminus\B$ i.e., $$J_2=\{(1,1),(3,3),(1,1,\pi),(3,3,\pi),(2,0)_g,(0,1)_\gamma,(2,0)_\gamma,(10,0)_\gamma\}.$$
The other two components $\CC_1$ and $\CC_3$ are dual to each other, and so we have (where duals are vertically aligned):

$$J_1=\{(1,0)_g,(3,2)_g,(1,0)_\gamma,(5,0)_\gamma,(9,0)_\gamma,(7,1)_\gamma\},$$
$$J_3=\{(3,0)_g,(2,1)_g,(13,1)_\gamma,(1,1)_\gamma,(5,1)_\gamma,(3,0)_\gamma\}.$$
    \item We have uniquely determined the category $\B$ as obtained from the quantum group $U_q\mathfrak{sl}_3$ up to two choices of $q$: $e^{-i \pi /8}$ and $e^{i 7\pi /8}$.  It is conceivable that these two choices lead to equivalent categories, as $PSU(3)_5$ may only depend on $q^2$.  The first choice is known to be unitary, while the second is pseudo-unitary but unitarity is open.
    \item Observe that $q_1^e$ and $q_1^h$ take the same values on $K\times G$ (and $G\times K$), so that the $S$ and $T$ matrices for the two choices $q_1^e$ and $q_1^h$ are identical except for the objects with labels in $G_*$. This does not have any effect on the proof above as $q_1^h$ and $q_1^e$ coincide for $(3,1)_g$ and $(2,0)_g$.  
\end{enumerate}

\end{rmk}

The unitary modular category $SU(3)_5$ contains a pointed subcategory conjugate to $SU(3)_1$ whose complement is $PSU(3)_5$.  The non-trivial simple objects in $PSU(3)_5$ have highest weights $$\{(1,0,-1), (2,-1,-1),(1,1,-2),(2,0,-2),(3,-1,-2),(2,1,-3)\}.$$  In terms of the fundamental weights $\varpi_1=\frac{1}{3}(2,-1,-1)$ and $\varpi_2=\frac{1}{3}(1,1,-2)$ the coordinates are $[1,1],[3,0],[0,3],[2,2],[4,1]$ and $[1,4]$. Let us label them $\Upsilon,\Lambda,\Lambda^*,\Omega,\Xi,\Xi^*$ respectively.
The basic data for $PSU(3)_5$ are found in Table~\ref{psu35data}.

\begin{table}[h]
\caption{Basic Data}
\centering
\begin{tabular}{|c|c|c|c|}
\hline
&$x_1\varpi_1+x_1\varpi_2$ &$d_A$ & $\theta_A$  \\
\hline 
$\mathbf{1}$ & $[0,0]$ & $1$ & $1$ \\
\hline
$\Lambda,\Lambda^*$ &$[3,0],[0,3]$ & $2+\sqrt{2}$ & $-i$ \\
\hline
$\Upsilon$  & $[1,1]$ & $2+2\sqrt{2}$ & $e^{i 3\pi /4}$ \\
\hline
 $\Xi,\Xi^*$ & $[4,1],[1,4]$ & $1+\sqrt{2}$ & $-1$\\
\hline
 $\Omega$& $[2,2]$ & $3+2\sqrt{2}$ & $1$ \\
\hline 
\end{tabular}\label{psu35data}\end{table}

If we order the objects: $[\mathbf{1},\Lambda,\Lambda^*,\Upsilon,\Xi,\Xi^*,\Omega]$ the fusion rules are easily determined using standard Lie theory.  We find that the fusion matrix for $\Lambda$ is:
$$N_\Lambda:=\left[ \begin {array}{ccccccc} 0&0&1&0&0&0&0\\ \noalign{\medskip}1&0&0
&1&0&0&1\\ \noalign{\medskip}0&1&0&0&0&1&1\\ \noalign{\medskip}0&0&1&1
&0&1&1\\ \noalign{\medskip}0&1&0&1&0&0&0\\ \noalign{\medskip}0&0&0&0&1
&0&1\\ \noalign{\medskip}0&1&1&1&1&0&1\end {array} \right]$$  The full set of fusion rules can be derived from those of $\Lambda$.  This can be seen as follows: since $N_\Lambda$ has $7$ distinct eigenvalues, any matrix that commutes with $N_\Lambda$ is a polynomial in $N_\Lambda$.  In particular each fusion matrix $N_x$ is a polynomial in $N_\Lambda$, and one column/row of $N_x$ is determined.

\bibliographystyle{plain}
%\bibliographystyle{keylabel_amsalpha}
%\bibliography{References}
%\end{document}

\end{document}